\theoremstyle{definition}
\newtheorem{thm}{Theorem}[section]
\newtheorem{lem}[thm]{Lemma}
\newtheorem{prop}[thm]{Proposition}
\theoremstyle{definition}
\newtheorem{defn}{Definition}[section]
\theoremstyle{remark}
\numberwithin{equation}{section}
\def\N{{\mathbb N}}
\def\R{{\mathbb R}}
\def\Z{{\mathbb Z}}
\def\0{{\mathbf 0}}
\def\1{{\mathbf 1}}
\def\Ocal{{\mathcal O}}
\def\sup{\mathrm{sup}}
\def\<{\langle}
\def\>{\rangle}
\DeclareMathOperator{\qand}{\;\;\text{and} \;\;}
\begin{document}
\renewcommand{\epsilon}{\varepsilon}

\title{Statistics of a Family of Piecewise Linear Maps}
\author{J. J. P. Veerman\thanks{Fariborz Maseeh Dept. of Math. and Stat., Portland State Univ., Portland, OR, USA; e-mail: veerman@pdx.edu}, P. J. Oberly\thanks{Dept. of Math., Oregon State University; e-mail: oberlyp@oregonstate.edu}, L. S. Fox\thanks{Fariborz Maseeh Dept. of Math. and Stat., Portland State Univ., Portland, OR, USA; e-mail: logfox@pdx.edu}\\
}\maketitle
\date{}

\begin{abstract}
We study statistical properties of a family of piecewise linear monotone circle maps $f_t(x)$ related
to the angle doubling map $x\rightarrow 2x$ mod 1.
In particular, we investigate whether for large $n$, the \emph{deviations}
$\sum_{i=0}^{n-1} \left(f_t^i(x_0)-\frac 12\right)$
upon rescaling satisfy a $Q$-Gaussian distribution if $x_0$ and $t$ are both independently and uniformly
distributed on the unit circle. This was motivated by the fact that if $f_t$ is the rotation
by $t$, then it was recently found that in this case the rescaled deviations are distributed as a $Q$-Gaussian
with $Q=2$ (a Cauchy distribution). This is the only case where a non-trivial (i.e. $Q\neq 1$) $Q$-Gaussian
has been analytically established in a conservative dynamical system.

In this note, however, we prove that for the family considered here, $\lim_n S_n/n$ converges to a
random variable with a curious distribution which is clearly not a $Q$-Gaussian or any other standard
smooth distribution.
\end{abstract}

\begin{section}{Introduction}                        

In this note, we study statistical properties of the family of truncated flat spot maps $f_t$, defined
as follows. One starts with $x\mapsto 2 x$ and truncates horizontally to obtain
a ``flat spot circle map" as illustrated in Figure \ref{fig:flatspotmap} and given by
\begin{equation*}
f_t(x)=\left\{\begin{matrix} t &  x\in [0,\frac t2]\\[0.1cm]
           2x \mod 1 & x \in [\frac{t}{2},\frac{1+t}{2}] \\[0.1cm]
          t & x \in [\frac{1+t}{2}, 1) \end{matrix} \right. .
\end{equation*}
In particular, we wish to study the quantity
\begin{equation*}
 S_n(t,x_0) := \sum_{i=0}^{n-1} \left(f_t^i(x_0)-\frac 12\right) .
\end{equation*}
By this we mean the histogram of $S_n(t,x_0)$ \emph{while keeping $n$ fixed and varying $t$ and $x_0$}.
We are interested in the limiting behavior of $S_n$ as $n$ tends to infinity.
Thus, in computations using $S_n$, one requires that $n$ is
``large". It is  easy to generalize all results of this paper to truncations of
$x\mapsto \tau x$ for any $\tau\in \Z$ with modulus greater than 1 (see \cite[Section 5]{Vsymbolic}
for some of the details). For simplicity, however,
we will stay with $\tau=2$ in this note. We will refer to the distribution of $S_n$ as the
distribution of the \emph{deviations} of the family $f_t$. The question we want to investigate is
whether the distribution of the values of $S_n$ upon rescaling satisfy a nontrivial $Q$-Gaussian
distribution (see Definitions \ref{def:functions} and \ref{def:stats} at the end of this section).

\vskip -0.0in
\begin{figure}[h]
\begin{center}
\includegraphics[height=4.0cm]{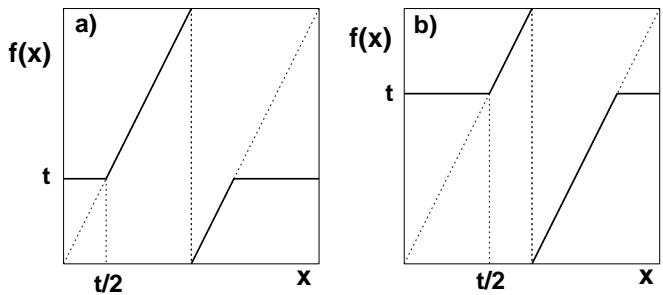}
\caption{The construction of $f_t(x)$, a) for $t=1/3$ and b) for $t=2/3$.}
\label{fig:flatspotmap}
\end{center}
\end{figure}

In the 1980s, C. Tsallis and others suggested (\cite{Tsallis09} and references therein) that in many physical
systems, in particular those with long range interactions, the values of certain observables might have
distributions that are not Gaussian, but a generalization thereof called $Q$-Gaussian.
In particular, this question has been studied for low-dimensional systems such as the standard map from
the unit torus $S^1\times S^1$ to itself
\begin{equation*}
f(x,y)=\left( x+y-k\sin(2\pi x), y-k\sin(2\pi x) \right) .
\end{equation*}
Here one fixes $n$, and chooses initial conditions $(x,y)$ uniformly distributed in $S^1\times S^1$.
In that case, for large $k$ (e.g. \(k=10\)), the dynamics tends to \emph{mostly} have large Lyapunov
exponents and correlations die out quickly. Thus, in this case, one expects the distribution of $S_n$ to be
\emph{approximately} the standard Boltzmann-Gibbs (or Gaussian) one, which coincides with the $Q$-Gaussian distribution for $Q=1$. (The fact that it is not exact is due the presence of elliptic islands even
for large $k$, see for example \cite{Karney, Duarte}.) Numerically this has been verified \cite{Chi, TB}.
For low $k$, on the other hand, correlations die out more slowly, and according to Tsallis' theory,
this would give a non-trivial $Q$-Gaussian distribution. Indeed for $k=0.2$, numerics show that that
is the case \cite{TB}. Somewhat surprisingly, in \cite{TT} it is argued that this curious behavior
appears to persist for $k=0$. Numerically, the authors measured $Q=1.935$ in that case.

Remarkably, even though the standard map with $k=0$ would seem to be excruciatingly simple (namely a
lamination of pure rotations), the analysis of its statistics is far from easy. In fact, its analysis
relies on a difficult theorem by Kesten \cite{Kesten}. Indeed, in this case \cite{BVV}, the distribution
of $S_n/\ln(n)$ tends to a Cauchy distribution, which again, as luck would have it, is a special case
of the $Q$-Gaussian distributions, namely $Q=2$ (not 1.935 as the measurements in \cite{TT} 
seemed to suggest).

To date this seems to be the only \emph{conservative} dynamical system where analytic proof
can be given that the deviations $S_n$ follow a non-Gaussian distribution. In this case, the
(rescaled) deviations tend to a non-trivial (i.e. $Q\ne 1$) $Q$-Gaussian distribution.
(For overdamped many body systems, there are various results known, see \cite{Moreira-ea-2018} and
references therein.) Because the topic of the deviations has attracted both interest and
controversy, any other examples where deviations can be analyzed rigorously are very valuable.
Below, we present such an example. It is intriguing that while the above example conforms to the theory
proposed by Tsallis, the example that we will now discuss clearly does not.

This brings us back to the the family $f_t$. Just like the $k=0$ standard map, it is a family of (weakly)
monotone circle maps with rotation numbers varying between 0 and 1.
Furthermore, like the low $k$ standard map, its dynamics is somewhere between chaotic and
completely simple. For any fixed $t$, the dynamics on any invariant set is semi-conjugate to a
rotation, and thus definitely not chaotic. But on the other hand, the dynamics is not quite as tame as
the family of rotations that constitute the standard map with $k=0$. Each individual map $f_t$ has
a non-trivial attractor and repeller. Thirdly, the dynamics of this
family is intimately related to that of the large $k$ standard map (see \cite{VTreno}). One can show
that the geometry of the repelling invariant sets of irrational rotation number is the same as the
asymptotic geometry of the Aubry Mather sets in the standard map \cite{TVasypmtgeom}. This becomes
clear if one locally `renormalizes' the standard map around these orbits \cite{VTreno}. In view of
the numerical results for the standard map just mentioned, a study of this family thus becomes
doubly interesting. The point here is not to define a 1-parameter family of maps that somehow mimics
the  low-$k$ standard map, but rather, to define a 1-parameter family of maps that shares
important characteristics (dynamics less tame than rotation, but still related to rotation)
with the low-$k$ standard map.

In Sections 2 and 3, we prove that the distribution $S_n/n$ converges to a fixed distribution.
We also obtain an exact expression for that distribution as an infinite sum.
In Section 4, we compute the error caused by approximating this distribution using a partial sum.
In Section 5, we study the tail of the distribution of $S_n/n$ and prove that it cannot be
(an approximation of) the tail of any $Q$-Gaussian.
In the concluding section, we exhibit approximations of the distribution and discuss its characteristics.
We conclude that, in spite of some superficial similarities, the distribution of $S_n/n$
is not a $Q$-Gaussian, neither can it be smoothed or averaged to give a $Q$-Gaussian.

For the record, we present some of the definitions relevant to our discussion here.
\begin{defn} For \(Q>0\), the \(Q\)-exponential $e_Q: \R \to (0,\infty)$, and its inverse, the
\(Q\)-logarithm $\ln_Q:(0,\infty)\to \R $, are given by
\[ e_Q(x)=(1+(1-Q)x)^{1/(1-Q)} \qquad \text{and} \qquad \ln_Q(y)=\frac{1-y^{Q-1}}{Q-1} . \]
\label{def:functions}
\end{defn}

It is straightforward to check that these functions are indeed inverses of one another and that
taking the limit \(Q\to 1\) gives the usual exponential and (natural) logarithm.

\begin{defn} We say that $y$ satisfies nontrivial $Q$-Gaussian statistics if the density of $y$ is given by
$C \, e_Q(-\beta (y-y_0)^2)$ for some $Q\ne 1$, where $C$ is a normalization constant.
\label{def:stats}
\end{defn}

\vskip .1in\noindent
{\bf Acknowledgements:} We are grateful to Tassos Bountis, Ugur Tirnakli, and Constantino Tsallis for
several useful conversations.

\end{section}

\begin{section}{Distribution of $\mathbf{S_{\mathbf n}/n}$}                    

We start with a result that summarizes some of the considerations in \cite{Vsymbolic,Virrat}. In what
follows, $\{x\}$ denotes the fractional part of $x$.

\begin{prop}[\cite{Vsymbolic,Virrat,Vhdim}]\label{prop:veermanpapers}
For each rational number $p/q \in (0,1)$ with \(\gcd(p,q) = 1\), there is a unique interval
$I_{p/q} \subset [0,1]$ such that: \\
(i) For all $t\in \textrm{int}( I_{p/q})$, the maps \(f_t\) have a unique \(q\)-periodic \emph{unstable} orbit $\Ocal_{p/q}^u$ and a unique \(q\)-periodic \emph{stable} orbit $\Ocal_{p/q}=(t,\{2t\},\dots , \{2^{q-1}t\}, t ,\dots)$. \\
(ii) All $x \in [0,1]\setminus \Ocal_{p/q}^u$ eventually reach the stable periodic orbit. \\
(iii) The set $[0,1]\setminus \bigcup_{p/q \in (0,1)} I_{p/q}$ has Hausdorff dimension $0$. \\
We will denote intervals $I_{p/q}$ by \emph{$p/q$ intervals}.
\end{prop}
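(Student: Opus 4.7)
The plan is to exploit two structural features of $f_t$: first, that $f_t$ is weakly monotone on the circle with $f_t(x)$ non-decreasing in $t$ for each fixed $x$ (check case by case on the three pieces), so it lifts to a weakly monotone degree-one map of $\R$ with a well-defined rotation number $\rho(t)$ that is continuous and non-decreasing in $t$; and second, that the flat spot $F_t := [0, t/2] \cup [(1+t)/2, 1)$ collapses to the single point $t$, so any forward orbit that ever enters $F_t$ is thereafter trapped on the forward doubling orbit $t, \{2t\}, \{4t\}, \ldots$. These two facts together reduce the combinatorics of periodic orbits to an explicit condition on $t$.

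For (i), I would define $q(t)$ to be the smallest integer $q \ge 1$ with $\{2^{q-1}t\} \in F_t$. Because $t \mapsto \{2^{i}t\}$ is piecewise affine in $t$ and the boundary of $F_t$ depends affinely on $t$, $q(t)$ is piecewise constant on a disjoint union of maximal open intervals. On each such interval the points $t, \{2t\}, \ldots, \{2^{q-1}t\}$ are distinct and form a genuine $q$-periodic orbit $\Ocal_{p/q}$ of $f_t$ (the iterate $\{2^{q-1}t\}\in F_t$ is sent back to $t$), with combinatorial winding number $p$ determined by how the orbit wraps around the circle. Because $\rho$ is continuous and non-decreasing, its level set $\rho^{-1}(p/q)$ is a (nondegenerate) closed interval, yielding the unique $I_{p/q}$.

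For (ii), I would use that the middle branch $2x\bmod 1$ is uniformly expanding with derivative $2$. A point whose forward orbit never enters $F_t$ must lie in the compact invariant set $\Lambda_t := \bigcap_{n\ge 0} f_t^{-n}([0,1]\setminus F_t)$, which under binary coding corresponds to a subshift of finite type obtained by forbidding the kneading word associated to the endpoints of $F_t$. A direct enumeration of admissible $q$-periodic sequences gives exactly one period-$q$ orbit, which by expansion is the repelling orbit $\Ocal^u_{p/q}$; uniform expansion also implies that every other point eventually meets $F_t$ and is thereafter captured on $\Ocal_{p/q}$, giving (ii).

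Claim (iii) is what I expect to be the main obstacle. The exceptional set $E := [0,1] \setminus \bigcup_{p/q} I_{p/q}$ is precisely the set of parameters with irrational rotation number, and to show $\dim_H E = 0$ I would set up a renormalization scheme: for $t \in E$, the first-return map of $f_t$ to a carefully chosen subinterval near the attractor is, after affine rescaling and up to controlled distortion, another map $f_{t'}$ with a renormalized parameter $t' \in E$. Expansion by $2$ on the middle branch yields that $|I_{p/q}|$ decays exponentially in $q$, while the number of mode-locking intervals at level $q$ grows only polynomially. A standard covering argument then bounds the $s$-dimensional Hausdorff measure of $E$ by a convergent geometric series for every $s > 0$. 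The delicate point, and where I would most heavily draw on the machinery of \cite{Vhdim, Virrat}, is controlling distortion across iterated renormalizations so that the geometric estimates remain uniform at all scales.
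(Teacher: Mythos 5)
The paper does not prove this proposition: it is imported wholesale from \cite{Vsymbolic,Virrat,Vhdim}, so there is no in-paper argument to compare yours against. Your sketch reconstructs the strategy of those references in broadly the right way: $f_t$ does lift to a continuous, weakly monotone, degree-one circle map that is monotone in $t$, the rational level sets of the rotation number are the mode-locking intervals $I_{p/q}$ (nondegenerate because of the flat spot), and any periodic orbit meeting the flat spot must be the doubling orbit of $t$ itself, which gives uniqueness of the stable orbit in (i) cleanly.

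Two genuine gaps remain. First, in (ii) the entire difficulty is the assertion that for $t\in\mathrm{int}(I_{p/q})$ the surviving set $\Lambda_t=\bigcap_{n\ge 0} f_t^{-n}\bigl([0,1]\setminus F_t\bigr)$ collapses to a \emph{single} period-$q$ orbit rather than a Cantor set; ``a direct enumeration of admissible $q$-periodic sequences gives exactly one orbit'' is precisely the Sturmian/order-preserving combinatorics that \cite{Virrat} is devoted to, and your sketch asserts it rather than proves it. Uniform expansion alone does not rule out a large $\Lambda_t$ (it does not for $t$ on the boundary of the mode-locking intervals or at irrational rotation numbers), so the combinatorial input cannot be skipped. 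Second, in (iii) summing $|I_{p/q}|^s$ over $p/q$ does not bound the Hausdorff measure of $E=[0,1]\setminus\bigcup I_{p/q}$; you must cover $E$ itself, which means controlling the number and lengths of the \emph{gaps} left at each level of the Farey tree and showing that count-times-$(\text{length})^s$ tends to zero for every $s>0$. That is where the real work of \cite{Vhdim} lies. On the other hand, your worry about ``controlling distortion across iterated renormalizations'' is misplaced: every branch of $f_t$ is affine with slope exactly $2$, so all return maps are exactly affine and there is no distortion to control --- this simplifies the renormalization scheme rather than obstructing it.
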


We remark that if $t$ is in the boundary of an interval $I_{p/q}$, then the stable and unstable
orbits coincide (becoming stable in one direction and unstable in the other). For irrational values in \((0,1)\),
something similar happens, and there is a unique invariant set, which is stable in one direction
and unstable in the other. The details of the correct description are quite cumbersome, and not necessary for this exposition,
so we leave them out.

For the remainder of this section, denote the flat spot $[0,\frac{t}{2}]\cup [\frac{1+t}{2},1)$ by $F_1$.
Restricted to the closure of its complement $\overline{F_1^c}$ (see Figure \ref{fig:flatspotmap}),
$f_t: \overline{F_1^c} \rightarrow [0,1]$ is a continuous bijection. Thus, for every positive $i$, there
is a unique non-empty interval that is the inverse image $f_t^{-i}(F_1)$ of $F_1$. Denote this inverse
image by $F_{i+1}$. By doing so, we have \(f_t^i(F_i) = t\).

%
%
%

\begin{thm}\label{thm:SnLimit}
Fix \(p/q\) with \(\gcd(p,q)=1\). For all \(t\in \textrm{int}(I_{p/q})\) we have
\[ \lim_{n\to \infty} \frac{S_n(t,x_0)}{n} = \frac{1}{q} \sum_{i=0}^{q-1} \left( \{2^i t\} - \frac{1}{2} \right) \]
for almost all \(x_0\in [0,1]\).
\end{thm}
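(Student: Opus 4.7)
The plan is to exploit Proposition~\ref{prop:veermanpapers}(ii): for almost every starting point $x_0$, the orbit under $f_t$ eventually lands on the finite stable periodic orbit $\Ocal_{p/q}$, and from that point onward the sum grows at a rate determined entirely by the average of the coordinates of $\Ocal_{p/q}$. Since the exceptional set on which the orbit fails to reach $\Ocal_{p/q}$ is contained in the unstable periodic orbit $\Ocal_{p/q}^u$, which consists of only $q$ points, it has Lebesgue measure zero, so ``almost every $x_0$'' is automatic once the pointwise statement on the complement is established.

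Fix $x_0 \notin \Ocal_{p/q}^u$ and let $N = N(x_0)$ be the smallest integer with $f_t^{N}(x_0) \in \Ocal_{p/q}$. Split the sum as
\[ S_n(t,x_0) \;=\; \sum_{i=0}^{N-1}\left(f_t^i(x_0)-\tfrac12\right) \;+\; \sum_{i=N}^{n-1}\left(f_t^i(x_0)-\tfrac12\right). \]
The ``transient'' piece is bounded in absolute value by $N/2$ and is independent of $n$, so it contributes $O(1/n)$ after dividing by $n$. For the ``periodic'' piece, once the orbit enters $\Ocal_{p/q}$ it cycles through the $q$ values $t,\{2t\},\dots,\{2^{q-1}t\}$ in some fixed cyclic order, so every block of $q$ consecutive iterates contributes exactly $\sum_{j=0}^{q-1}(\{2^j t\}-\tfrac12)$, regardless of the starting phase on the orbit. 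Writing $n - N = mq + r$ with $0 \le r < q$, the periodic piece equals $m \sum_{j=0}^{q-1}(\{2^j t\}-\tfrac12) + R$, where $|R| \le r/2 < q/2$.

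Dividing by $n$ and sending $n \to \infty$, the transient contribution and the remainder $R/n$ both vanish, while $m/n \to 1/q$, yielding the claimed limit for every $x_0 \notin \Ocal_{p/q}^u$. I do not expect any serious obstacle: all the substantive dynamical content (existence of the stable orbit, the structure of $\Ocal_{p/q}$, and the fact that only the measure-zero unstable orbit fails to be absorbed) is already packaged into Proposition~\ref{prop:veermanpapers}, so what remains is an elementary bookkeeping argument. The only points to be careful about are that $N(x_0)$ is finite but not uniformly bounded in $x_0$ (irrelevant since $n \to \infty$ for each fixed $x_0$), and that the hypothesis $t \in \mathrm{int}(I_{p/q})$ is essential so that the stable orbit genuinely exists and absorbs the generic orbit.
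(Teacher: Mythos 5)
Your proposal is correct and follows essentially the same route as the paper: decompose $S_n$ into a bounded transient piece, $\lfloor (n-N)/q\rfloor$ full periods each contributing $\sum_{j=0}^{q-1}(\{2^j t\}-\tfrac12)$, and a remainder of fewer than $q$ terms, then divide by $n$. The only cosmetic difference is that the paper locates the absorption time via the flat-spot preimages $F_\ell$ (so the orbit enters the stable cycle exactly at the point $t$), whereas you sidestep the phase question by observing that the full-period sum is phase-independent; both handle this step equally well.
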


\begin{proof}
Fix \(t\in \textrm{int}(I_{p/q})\) and let \(x_0\) be given such that \(x_0\notin \Ocal_{p/q}^u\).
By item (ii) of Proposition \ref{prop:veermanpapers} and the definition of the sets \(F_i\), we can find some \(\ell\in \N\) such that \(x_0 \in F_\ell\). For any \(n\in \N\) sufficiently large, we can write \(n = \ell + kq + j\) where \(1\leq j\leq q-1\). Using this substitution for \(n\), and noting that \(f^i_t(x)\) enters the stable orbit \( t , \{2t\} , \cdots , \{2^{q-1} t\} , t , \cdots \) beginning at \(i=\ell\), we can write
\begin{align*}
S_n(t,x_0) & = \sum_{i=0}^{n-1} \left( f_t^i(x_0) - \frac{1}{2} \right)
\\ & = \sum_{i=0}^{\ell -1} \left( f_t^i(x_0) - \frac{1}{2} \right) + k\sum_{i=0}^{q-1} \left( \{2^i t\} - \frac{1}{2} \right) + \sum_{i=0}^{j-1} \left( \{2^i t\} - \frac{1}{2} \right)
\\ & = S_{\ell}(t,x_0) + \sum_{i=0}^{j-1} \left( \{2^i t\} - \frac{1}{2} \right) + k\sum_{i=0}^{q-1} \left( \{2^i t\} - \frac{1}{2} \right) .
\end{align*}
Note that \(\left| S_{\ell}(t,x_0) + \sum_{i=0}^{j-1} \left( \{2^i t\} - \frac{1}{2} \right) \right| \leq \frac{\ell + q}{2}\).
Now, again using \(n=\ell+kq+j\), we observe that
\begin{align*}
& \left| \frac{S_n(t,x_0)}{n} - \frac{1}{q} \sum_{i=0}^{q-1} \left( \{2^i t\} - \frac{1}{2} \right) \right|
\\ & \quad = \left| \frac{1}{n} S_{\ell}(t,x_0) + \frac{1}{n}\sum_{i=0}^{j-1} \left( \{2^i t\} - \frac{1}{2} \right) - \frac{\ell + j}{q(\ell + kq + j)} \sum_{i=0}^{q-1} \left( \{2^i t\} - \frac{1}{2} \right) \right|
\\ & \quad \leq \frac{\ell + q}{2n} + \frac{\ell + j}{qn} \sum_{i=0}^{q-1} \left( \{2^i t\} - \frac{1}{2} \right)
\end{align*}
Taking the limit as \(n\to \infty\) yields the result.
\end{proof} 

Notice that the initial starting point $x_0$ has little influence on the distribution of $S_n/n$ for
large $n$. Thus to study the distribution $S_n(t, x_0)/n$, it suffices to consider the contribution by
$t$ alone. By item (iii) of Proposition \ref{prop:veermanpapers}, this can be achieved by considering $t \in I_{p/q}$
and then summing over all rationals to obtain the full distribution.
We fix some notation. Let $\nu$ be the density of the distribution of $S_n(t, x)/n$ as $n$ tends to infinity,
with $t$ and $x$ uniformly distributed in $[0,1]$, and let $\nu_{p/q}$ be the density of the distribution
of $S_n(t, x)/n$ as $n$ tends to infinity when $t \in I_{p/q}$. Then we have pointwise convergence:
\begin{equation}
\nu(z) = \sum_{q = 2}^\infty \sum_{\substack{ 1 \leq p < q \\ \gcd(p,q) = 1}} \nu_{p/q}(z) .
\label{eq:nu(z)}
\end{equation}
We therefore turn our attention to the computation of $\nu_{p/q}$.
\end{section}

\begin{section}{Computing $\nu_{p/q}$}                                                                       

The first step is to compute the $p/q$ intervals $I_{p/q}$. From now on, we will
always assume that a rational $p/q$ is given in lowest terms, i.e. $\gcd(p,q)=1$.
Let $s^+(p/q) = s^+$ be the binary string defined \cite{Vsymbolic,Virrat} by
\begin{equation}\label{eq:stringdef2}
    s^+_i = \lfloor (i+1)p/q \rfloor - \lfloor ip/q\rfloor .
\end{equation}
Notice that $s^+(p/q)$ is periodic with period $q$, and that $s^+_q = 0$. Furthermore, the first
$q$ entries of $s^+$ contain exactly $p$ ``$1$''s. Finally, we need to define
\begin{equation}
t_{p/q}=\sum_{i=1}^{q}s_i^+2^{-i}\,.
\label{eq:t0}
\end{equation}

\begin{lem}[\cite{Vsymbolic,Virrat}]
Let $s^+$ be the binary string associated to the rotation number $p/q \in (0,1)$ as above.
The \(p/q\) interval is given by
\[ I_{p/q} = \left[ \frac{2^q t_{p/q}-1}{2^{q}-1} , \frac{2^q t_{p/q}}{2^{q}-1} \right] , \]
where \(t_{p/q}\) is as given in (\ref{eq:t0}).
\end{lem}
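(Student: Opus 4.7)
The plan is to identify the two endpoints of $I_{p/q}$ as the parameter values at which the stable $q$-periodic orbit of $f_t$ degenerates by having its last iterate touch the boundary of the flat spot. Throughout, set $\sigma := \sum_{i=1}^{q-1} s^+_i \, 2^{q-1-i}$, an integer. Because $s^+_q = 0$, the definition (\ref{eq:t0}) gives $2^q t_{p/q} = 2\sigma$, so the claimed endpoints take the form $R := \frac{2\sigma}{2^q - 1}$ and $L := \frac{2\sigma - 1}{2^q - 1}$.

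By Proposition \ref{prop:veermanpapers}, for $t \in \textrm{int}(I_{p/q})$ the orbit $(t, \{2t\}, \ldots, \{2^{q-1}t\})$ is $q$-periodic under $f_t$, which is equivalent to the two conditions (i) $\{2^k t\} \in [t/2, (1+t)/2]$ for $k = 0, 1, \ldots, q-2$ (so $f_t$ acts by doubling) and (ii) $\{2^{q-1}t\} \in [0, t/2] \cup [(1+t)/2, 1)$ (so $f_t$ resets to $t$ via the flat-spot rule). The endpoints of $I_{p/q}$ are reached precisely when $\{2^{q-1}t\}$ sits on the boundary of the flat spot, i.e.\ when $\{2^{q-1}t\} \in \{t/2, (1+t)/2\}$. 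A direct arithmetic check confirms that $R$ and $L$ are exactly these boundary values. From $(2^q - 1)R = 2\sigma$ one obtains
\[
2^{q-1} R \;=\; \sigma + \frac{\sigma}{2^q - 1} \;=\; \sigma + \frac{R}{2},
\]
so $\{2^{q-1}R\} = R/2$, the left edge of the flat spot at parameter $R$. Analogously, $(2^q - 1)L = 2\sigma - 1$ yields $2^{q-1} L = (\sigma - 1) + (1 + L)/2$, so $\{2^{q-1}L\} = (1 + L)/2$, the right edge at parameter $L$.

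It remains to establish the two inclusions $[L, R] \subseteq I_{p/q}$ and $I_{p/q} \subseteq [L, R]$. The second inclusion is the easier one: moving $t$ beyond either endpoint pushes $\{2^{q-1}t\}$ off the flat-spot boundary into the linear region, violating (ii) and destroying the $q$-periodic orbit. The first inclusion, on the other hand, requires verifying that condition (i) holds for every $t \in [L, R]$, that is, no intermediate iterate $\{2^k t\}$ with $0 \leq k \leq q - 2$ strays into the flat spot while $t$ sweeps across the interval. This is the main obstacle, since each $\{2^k t\}$ is only a piecewise-affine function of $t$ with slopes as large as $2^{q-2}$, and a naive tracking becomes unwieldy for large $q$. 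The cleanest route is the Sturmian encoding of \cite{Vsymbolic,Virrat}: the sequence $s^+$ is built precisely so that the doubling orbit of the $q$-periodic point $R$ consists of $q$ distinct points arranged on the circle in the cyclic order of rotation by $p/q$, with exactly one point (namely $\{2^{q-1}R\} = R/2$) on the flat-spot boundary and the other $q-1$ safely interior to $(R/2, (1+R)/2)$. The corresponding combinatorial bookkeeping, together with continuity and the matching picture at $L$, propagates the validity of (i) across the whole interval and yields $I_{p/q} = [L, R]$.
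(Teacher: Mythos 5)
First, a point of reference: the paper does not prove this lemma itself --- it is imported from \cite{Vsymbolic,Virrat} --- so there is no in-paper argument to compare against. Your endpoint computations are correct and are genuinely worth recording: with $\sigma=\sum_{i=1}^{q-1}s_i^+2^{q-1-i}$ one indeed has $2^qt_{p/q}=2\sigma$ because $s_q^+=0$, and the identities $\{2^{q-1}R\}=R/2$ and $\{2^{q-1}L\}=(1+L)/2$ correctly place the $(q-1)$st doubling iterate of the two candidate endpoints on the two edges of the flat spot. This identifies $[L,R]$ as the right candidate for the mode-locking interval and explains its length $(2^q-1)^{-1}$.

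However, as you yourself flag, the argument stops exactly where the real content of the lemma begins. The claim that for every $t\in[L,R]$ the iterates $\{2^kt\}$, $0\le k\le q-2$, stay in the doubling region $[t/2,(1+t)/2]$ --- i.e.\ that the orbit of $t$ does not fall into the flat spot before step $q-1$ --- is precisely what singles out the Sturmian string $s^+$ of (\ref{eq:stringdef2}) among all strings with $p$ ones, and it is the substance of the theorems in \cite{Vsymbolic,Virrat}. Your proposal reduces the lemma to that fact and then cites it, so as a self-contained proof it has a gap at its center; no property of the definition (\ref{eq:stringdef2}) is actually used. Two smaller points: for the inclusion $I_{p/q}\subseteq[L,R]$ you only check that the particular candidate orbit $(t,\{2t\},\dots)$ fails to close up after exactly $q$ steps once $t$ leaves $[L,R]$, whereas one must rule out any stable periodic orbit of rotation number $p/q$ there (this follows from monotonicity of the rotation number in $t$ together with the uniqueness clause of Proposition \ref{prop:veermanpapers}, but it needs saying); and one must verify that the combinatorial type of the constructed orbit is indeed $p/q$, which again rests on the Sturmian property of $s^+$ that the proposal defers to the references.
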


\begin{lem}\label{lem:equalendpoints}
For simplicity of notation, let \(t^-\) and \(t^+\) be such that \(I_{p/q} = [t^- , t^+]\). Then
\[ \sum_{i=0}^{q-1}\,\left\{2^it^-\right\} = \sum_{i=0}^{q-1}\,\left\{2^it^+\right\} . \]
\end{lem}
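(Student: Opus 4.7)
My strategy is to show that both sums equal the integer $p$, by exploiting that the endpoints $t^\pm$ admit $q$-periodic binary expansions.

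By the previous lemma, $t^+ = N/(2^q - 1)$ and $t^- = (N-1)/(2^q - 1)$, where $N := 2^q\, t_{p/q} = \sum_{i=1}^q s_i^+\, 2^{q-i}$ is a non-negative integer. Since $1/(2^q - 1) = \sum_{k\ge 1}2^{-qk}$, one checks directly that $t^+$ has the $q$-periodic binary expansion $0.s_1^+ s_2^+ \cdots s_q^+ s_1^+ s_2^+ \cdots$, and analogously that $t^-$ has periodic binary expansion $0.b_1 b_2 \cdots b_q b_1 b_2 \cdots$, where $b_1 b_2 \cdots b_q$ denotes the $q$-bit binary representation of $N-1$.

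Since the doubling map acts on these expansions by shifting, $\{2^i t^+\} = \sum_{j=1}^\infty s_{i+j}^+\, 2^{-j}$ (extending $s^+$ to $\N$ periodically). Interchanging the order of summation,
\[
\sum_{i=0}^{q-1} \{2^i t^+\} \;=\; \sum_{j=1}^\infty 2^{-j} \sum_{i=0}^{q-1} s_{i+j}^+ \;=\; p\sum_{j=1}^\infty 2^{-j} \;=\; p,
\]
because the inner sum spans a full period of $s^+$, containing exactly $p$ ones (by telescoping in (\ref{eq:stringdef2})). The identical computation gives $\sum_{i=0}^{q-1}\{2^i t^-\} = b_1 + \cdots + b_q$, the Hamming weight of $N-1$ in its $q$-bit form.

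It remains to verify that this Hamming weight also equals $p$. A direct computation from (\ref{eq:stringdef2}), using only $0 < p/q < 1$, gives $s_q^+ = \lfloor p + p/q\rfloor - p = 0$ and $s_{q-1}^+ = p - \lfloor p - p/q\rfloor = 1$. Thus $N$ has trailing binary digits $\cdots 1\,0$, whence $N-1$ has trailing digits $\cdots 0\,1$, which preserves the Hamming weight. Both sums therefore equal $p$. The only step with real content is identifying the last two bits of $N$; everything else reduces to the interchange of summation displayed above.
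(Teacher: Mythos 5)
Your proof is correct and follows essentially the same route as the paper: both arguments rest on the $q$-periodic binary expansions of $t^\pm$ and the observation that $\sum_{i=0}^{q-1}\{2^i t\}$ depends only on the number of ones per period (namely $p$). The one place you go beyond the paper is in justifying why passing from $N$ to $N-1$ preserves the digit count --- the paper invokes only $s_q^+=0$, whereas your computation of $s_{q-1}^+ s_q^+ = 10$ is what actually makes the borrow argument airtight.
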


\begin{proof}
Recall that \(t_{p/q}\) is defined using the binary string \(s_1^+ s_2^+ \cdots s_q^+\) which has precisely \(p\) ``1''s and \(q-p\) ``0''s. Therefore, \(t^+\) is represented by the infinite \(q\)-periodic binary string
\[ 0.s_1^+ s_2^+ \cdots s_q^+s_1^+ s_2^+ \cdots . \]
Given that \(s_q^+ = 0\) and \(t^- = t^+ - \frac{1}{2^q-1}\), we know that \(t^-\) is also an infinite \(q\)-periodic binary string with \(p\) ``1''s and \(p-q\) ``0''s in the first \(q\) digits. Thus, the given sums (which are sums over a binary shift) are in fact equal.
\end{proof}

\begin{thm}\label{thm:nudensity}
The density $\nu_{p/q}$ is given by
\[ \nu_{p/q} = \frac{q}{2^q - 1} \1_{J_{p/q}},\]
where $\1_X$ is the characteristic function of $X$ and
\[
J_{p/q} = \left[ \frac{p-t_{p/q}}{q} - \frac{1}{2} , \frac{p- t_{p/q} +1}{q} - \frac{1}{2} \right] .
\]
\end{thm}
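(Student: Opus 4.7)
My plan is to realize $\nu_{p/q}$ as the pushforward, under the map
\[ g(t) := \frac{1}{q}\sum_{i=0}^{q-1}\Bigl(\{2^i t\}-\tfrac{1}{2}\Bigr), \]
of the restriction of Lebesgue measure to $I_{p/q}$, and then show that $g:I_{p/q}\to J_{p/q}$ is, off a measure-zero set, a bijection with constant slope $(2^q-1)/q$. Granting this, the pushed-forward density is the constant $q/(2^q-1)$ on $J_{p/q}$, which is the claim. The identification of $g$ as the almost-everywhere limit of $S_n(t,x_0)/n$ for $t\in I_{p/q}$ is Theorem \ref{thm:SnLimit}.

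The key structural observation is the decomposition
\[ g(t) = \frac{(2^q-1)t - M(t)}{q} - \frac{1}{2}, \qquad M(t) := \sum_{i=0}^{q-1}\lfloor 2^i t\rfloor. \]
Thus $g$ is affine with slope $(2^q-1)/q$ between discontinuities of $M$, and at each such discontinuity $g$ drops by exactly $1/q$. Since $|I_{p/q}|=1/(2^q-1)$, the affine portions contribute a total rise of $1/q$ across $I_{p/q}$; and since Lemma \ref{lem:equalendpoints} forces $g(t^-)=g(t^+)$, there must be exactly one jump of $g$ inside $I_{p/q}$, say at $t^*$.

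I will then identify $t^*=t_{p/q}$. Because $s_q^+=0$, $2^{q-1}t_{p/q}=\sum_{i=1}^{q-1}s_i^+\,2^{q-1-i}$ is an integer, so $\lfloor 2^{q-1}t\rfloor$ is discontinuous at $t_{p/q}$; and the endpoint formula $I_{p/q}=[(2^q t_{p/q}-1)/(2^q-1),\,2^q t_{p/q}/(2^q-1)]$ shows $t_{p/q}\in (t^-,t^+)$. To compute the image, I use the telescoping
\[ \sum_{i=0}^{q-1}\{2^i t^+\} \;=\; \sum_{j=1}^{\infty}2^{-j}\sum_{i=0}^{q-1}s_{i+j}^+ \;=\; p, \]
which follows from the $q$-periodicity of $s^+$ together with the count $\sum_{i=1}^{q}s_i^+=p$. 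Hence $g(t^\pm)=p/q-1/2$, which pins down $M$ on the left affine piece as $(2^q-1)t^--p=2^q t_{p/q}-1-p$. Substituting into the explicit formula for $g$ yields
\[ g(t^{*-}) = \frac{p-t_{p/q}+1}{q}-\frac{1}{2}, \qquad g(t^{*+}) = \frac{p-t_{p/q}}{q}-\frac{1}{2}, \]
which are, respectively, the right and left endpoints of $J_{p/q}$. Because the two affine branches meet in image only at the common value $p/q-1/2$, they cover $J_{p/q}$ once each apart from this single point, which gives the required bijection and the uniform density $q/(2^q-1)$.

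The main obstacle is locating the unique jump of $g$ inside $I_{p/q}$. Once it is pinned down as $t_{p/q}$ --- which relies on the structural fact that $s_q^+=0$ makes $t_{p/q}$ a dyadic rational at level $q-1$ --- the rest is routine algebra on the two linear branches.
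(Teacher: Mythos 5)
Your proof is correct and follows the same overall route as the paper: both reduce, via Theorem \ref{thm:SnLimit}, to the function $\langle s\rangle(t)=\frac1q\sum_{i=0}^{q-1}(\{2^it\}-\frac12)$ on $I_{p/q}$, observe that it is piecewise affine in $t$ with slope $(2^q-1)/q$ and a single downward jump of $1/q$ at $t_{p/q}$, use Lemma \ref{lem:equalendpoints} to match the values at the two endpoints of $I_{p/q}$, and then push forward Lebesgue measure to get the constant density $q/(2^q-1)$ on $J_{p/q}$. The one genuinely different ingredient is your justification that the jump is unique: the paper asserts, essentially on dynamical grounds, that the tuple $(\{2^it\})_{i=0}^{q-1}$ has exactly one discontinuity on $\mathrm{int}(I_{p/q})$ (at $i=q-1$), whereas you derive it by bookkeeping --- the affine pieces rise by $\frac{2^q-1}{q}\,|I_{p/q}|=\frac1q$ in total, the endpoint values of $\langle s\rangle$ agree, and every discontinuity of $M(t)=\sum_i\lfloor 2^it\rfloor$ produces a drop of $\langle s\rangle$ by a positive integer multiple of $1/q$, so there is exactly one jump and it has size exactly $1/q$; you then locate it at $t_{p/q}$ from $s_q^+=0$ (which makes $2^{q-1}t_{p/q}$ an integer) together with $t_{p/q}\in\mathrm{int}(I_{p/q})$. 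This is a clean, self-contained replacement for a step the paper leaves mostly to assertion and citation. The second, more minor, difference is the endpoint computation: the paper evaluates $\sum_i\{2^it_{p/q}\}=p-t_{p/q}$ directly via Proposition \ref{prop:geometricseries}, while you compute $\sum_i\{2^it^+\}=p$ from the $q$-periodicity of $s^+$ and back-solve for the affine constant on each branch; the two computations are equivalent and yield the same interval $J_{p/q}$.
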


\vskip -0.0in
\begin{figure}[h]
\begin{center}
\includegraphics[height=4.0cm]{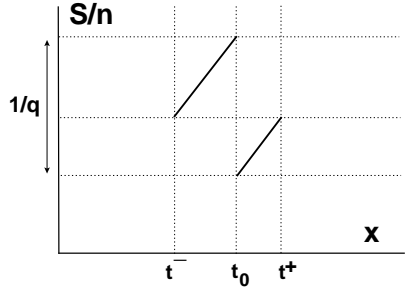}
\caption{Schematic picture of $\frac{S_n(t,x)}{n}$ for $t\in I_{p/q}=[t^-,t^+]$ as $n$ tends to infinity.}
\label{fig:discontinuity}
\end{center}
\end{figure}

\begin{proof}
To begin, we will restrict to $t\in \text{int}(I_{p/q})$. Thus $f_t$ must have a stable periodic orbit with a point
in the flat spot. We again use the simplified notation \(I_{p/q} = [t^- , t^+]\). 

By Theorem \ref{thm:SnLimit}, we have
\begin{equation}
\lim_{n\rightarrow \infty} \frac{S_n(t,x_0)}{n} = \frac{1}{q}\sum_{i=0}^{q-1} \left( \{2^i t\} - \frac{1}{2} \right).
\label{eq:Sovern}
\end{equation}
Denoting this limit by $\langle s\rangle$, we find
\begin{equation}
\partial_t\,\langle s\rangle = \frac{2^q-1}{q}\,,
\label{eq:derivative}
\end{equation}
except at the discontinuities of $\langle s\rangle$.

We know, however, exactly where those discontinuities arise.
For $t$ in the prescribed range, the $q$-tuple $(\{2^i t\})_{i=0}^{q-1}$ has precisely one discontinuity, namely for $i=q-1$.
Thus we have the situation where $\{2^{q-1}t^-\}$ equals the left endpoint of the flat spot while  $\{2^{q-1}t^+\}$ equals the right endpoint of the flat spot.
In between, at $t=t_{p/q}$ defined in equation \eqref{eq:t0}, we have
\begin{equation}
\lim_{t\nearrow t_{p/q}} \{2^{q-1}t\} =1 \quad \qand \quad \lim_{t\searrow t_{p/q}} \{2^{q-1}t\} =0\,.
\label{eq:discontinuity}
\end{equation}
Therefore, using \eqref{eq:Sovern}, we get (see Figure \ref{fig:discontinuity})
\[
\lim_{t\nearrow t_{p/q}} \langle s\rangle = \lim_{t\searrow t_{p/q}} \langle s\rangle - \frac 1q \,.
\]
Furthermore, this implies that the $q$th image of $t_{p/q}/2$ under $f_{t_{p/q}}$ equals 0.
Thus $t_{p/q}$ is the unique point of jump discontinuity.

Lemma \ref{lem:equalendpoints} also establishes that that at the ends of the interval $I_{p/q}$
\begin{equation}
\lim_{n\rightarrow \infty}\, \frac{S_n(t^-,x)}{n}= \lim_{n\rightarrow \infty}\, \frac{S_n(t^+,x)}{n}\,.
\label{eq:discont2}
\end{equation}
Statements \eqref{eq:derivative}, \eqref{eq:discontinuity}, and \eqref{eq:discont2} establish
the description of $\langle s\rangle$ as given in Figure \ref{fig:discontinuity}.

Thus for $t\in I_{p/q}$, the support $J_{p/q}$ of $\langle s\rangle$ is given by
\[
J_{p/q}=\left[\frac 1q\,\sum_{i=0}^{q-1}\,\left(\{2^i t_{p/q}\} - \frac 12 \right),
\frac 1q\,\sum_{i=0}^{q-1}\,\left(\{2^i t_{p/q}\} - \frac 12 \right)+\frac 1q\right]\,.
\]
By Proposition \ref{prop:geometricseries}, this is equal to
\[
J_{p/q} = \left[ \frac{p-t_{p/q}}{q} - \frac{1}{2} , \frac{p- t_{p/q} +1}{q} - \frac{1}{2} \right]\,.
\]

This interval is the support of $\nu_{p/q}$. For fixed $p/q$ and $t \in I_{p/q}$,
$\langle s \rangle$ is linear a.e. in $t$ (with one jump discontinuity). Therefore $\nu_{p/q}$ is a.e. equal to a
constant on its support. As $\int \nu_{p/q}(z)\; dz = |I_{p/q}| = (2^q - 1)^{-1}$, and $|J_{p/q}| = 1/q$,
this gives the constant $\frac{q}{2^q - 1}$.
\end{proof}

\vskip 0.0in\noindent
{\bf Remark:} Theorem \ref{thm:nudensity} implies that the support of $\nu_{p/q}$ is in $[-\frac 12, \frac 12]$.
In fact, it immediately follows that the support of the distribution of $\langle s\rangle$ equals
$[-\frac 12, \frac 12]$.

\end{section}

\begin{section}{Error Estimate for $\nu$}                                                                    


Let \(\nu_N\) be the \(N\)th partial sum of \(\nu\),
\[\nu_N(x) =\sum_{q = 2}^N \sum_{ \substack{ 1 \leq p < q \\ \gcd(p, q) = 1}} \nu_{p/q}(x) . \]
The primary result for this section is the absolute error of this partial sum, given in Theorem \ref{thm: sup norm error}.

\begin{lem}\label{lem:tripleint}
Fix an integer $q \geq 3$ and let $p_1$ and $p_2$ be distinct positive integers relatively prime to $q$.
If \(p_1< p_2\), then $J_{p_1/q} \cap J_{p_2/q} \ne \emptyset$ if and only if $p_2 = p_1 + 1$.
\end{lem}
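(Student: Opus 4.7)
\medskip

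\textbf{Proof plan.} The plan is to use the explicit description of $J_{p/q}$ from Theorem \ref{thm:nudensity} to reduce the intersection question to a simple inequality involving $t_{p_1/q}$ and $t_{p_2/q}$, and then settle it using two facts: (a) $t_{p/q} \in (0,1)$ for every admissible $p/q$, and (b) $t_{p/q}$ is strictly increasing in $p$ for fixed $q$. Both facts follow from earlier work: (a) is immediate from the definition \eqref{eq:t0} together with $s_q^+ = 0$ and $s_1^+ = 0$ not being allowed for $p \geq 1$, and (b) follows because the intervals $I_{p/q}$ are positioned on $[0,1]$ in the order dictated by the rotation number $p/q$ (the devil's staircase for the family $f_t$ is weakly monotone), and $t_{p/q}$ determines $I_{p/q}$ via the formula in the preceding lemma.

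Writing $L(p) := \frac{p - t_{p/q}}{q} - \frac{1}{2}$ for the left endpoint of $J_{p/q}$, each $J_{p/q}$ is a closed interval of length $1/q$, so $J_{p_1/q} \cap J_{p_2/q} \ne \emptyset$ is equivalent to $|L(p_2) - L(p_1)| \le 1/q$. A direct computation gives
\[
L(p_2) - L(p_1) \;=\; \frac{(p_2 - p_1) - (t_{p_2/q} - t_{p_1/q})}{q}.
\]
By the monotonicity fact (b), $\Delta := t_{p_2/q} - t_{p_1/q} > 0$, and by (a), $\Delta < 1$. So in all cases $L(p_2) > L(p_1)$, and the intersection condition becomes $(p_2 - p_1) - \Delta \le 1$, i.e.\ $\Delta \ge (p_2 - p_1) - 1$.

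If $p_2 = p_1 + 1$, the condition reads $\Delta \ge 0$, which holds strictly, and in fact $0 < L(p_2) - L(p_1) < 1/q$, so the two intervals overlap in a set of positive length. If $p_2 - p_1 \ge 2$, the condition would require $\Delta \ge p_2 - p_1 - 1 \ge 1$, contradicting $\Delta < 1$; hence the intervals are disjoint. This establishes the ``if and only if.''

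The only step that requires more than a line is justifying the strict monotonicity of $p \mapsto t_{p/q}$ (fact (b)). The expected route is to invoke the standard fact that the rotation number $\rho(t)$ of the family $f_t$ is monotone non-decreasing in $t$ (a property inherited from the monotonicity of each $f_t$ as a degree-one circle map), combined with the observation from Proposition \ref{prop:veermanpapers} and the preceding lemma that distinct $I_{p/q}$ are disjoint intervals on which $\rho$ takes distinct rational values. Alternatively, one may argue directly from \eqref{eq:stringdef2} that the binary word $s^+(p/q)$ is lexicographically increasing in $p$ for fixed $q$, which translates to strict increase of $t_{p/q}$ as a dyadic number. Either route is short, but this is the only nontrivial ingredient beyond the arithmetic above.
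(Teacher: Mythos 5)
Your proof is correct and follows essentially the same route as the paper's: both reduce the overlap question to the inequality $p_2-p_1 \le t_{p_2/q}-t_{p_1/q}+1$ and conclude from $0<t_{p_2/q}-t_{p_1/q}<1$, except that you additionally justify the strict monotonicity of $p\mapsto t_{p/q}$ (the lexicographic-ordering argument for $s^+(p/q)$ is the right one), a fact the paper uses without comment. One small slip in an aside: $s_1^+$ can equal $0$ for $p\ge 1$ (e.g.\ $p/q=1/3$ gives $s^+=010$), but $t_{p/q}>0$ follows anyway because the first $q$ digits of $s^+$ contain exactly $p\ge 1$ ones.
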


\begin{proof}
Suppose $\tau \in J_{p_1/q} \cap J_{p_2/q}$. Let
$t_i = t_{p_i/q}$, where $t_{p/q}$ is defined in equation \eqref{eq:t0}. Then
\[ q\tau + q/2 \in [p_i-t_i, p_i - t_i + 1] \]
Note that $p_1-t_1<p_2-t_2$, so the two intervals overlap if and only if $p_2-t_2<p_1-t_1+1$.
Since
\[ p_2-t_2<p_1-t_1+1 \iff p_2 - p_1 < t_2 - t_1 + 1 \]
and $t_i \in (0, 1)$ with \(t_1 < t_2\), we must have \(p_2-p_1 = 1\).
\end{proof}

\begin{thm}\label{thm: sup norm error}
Fix an integer $N \geq 2$. Then
\begin{equation}\label{eq:supnormerror}
     \| \nu - \nu_N \|  < \frac{4(N+2)}{2^{N+1}-1}
\end{equation}
where \(\| f \| = \sup_{x \in [-1/2, 1/2]} |f(x)|\).
\end{thm}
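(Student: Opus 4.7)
The plan is to bound $\nu(x)-\nu_N(x) = \sum_{q=N+1}^\infty \sum_{p} \nu_{p/q}(x)$ pointwise by combining Lemma \ref{lem:tripleint} with a tail estimate on the explicit geometric-like series $\sum_q q/(2^q-1)$. The key observation is that Lemma \ref{lem:tripleint} prevents three distinct intervals $J_{p/q}$ from sharing a point: if $x \in J_{p_1/q}\cap J_{p_2/q}\cap J_{p_3/q}$ with $p_1<p_2<p_3$ coprime to $q$, then pairwise application of the lemma forces $p_2=p_1+1$ and $p_3=p_1+1$ simultaneously, a contradiction. Hence for every $x$ and every $q$, at most two of the characteristic functions $\1_{J_{p/q}}$ are nonzero at $x$, and since $\nu_{p/q}(x)\le q/(2^q-1)$ by Theorem \ref{thm:nudensity}, we get
\[
|\nu(x)-\nu_N(x)| \;\le\; \sum_{q=N+1}^{\infty}\, 2\cdot \frac{q}{2^q-1}.
\]

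Next, I would estimate the tail. For $q\ge N+1$ the inequality $2^q\ge 2^{N+1}$ yields
\[
\frac{1}{2^q-1} \;\le\; \frac{2^{N+1}}{2^q\,(2^{N+1}-1)},
\]
which is an equality only at $q=N+1$ and strict otherwise. Summing,
\[
\sum_{q=N+1}^{\infty}\frac{q}{2^q-1} \;\le\; \frac{2^{N+1}}{2^{N+1}-1}\,\sum_{q=N+1}^{\infty}\frac{q}{2^q},
\]
with strict inequality because the sum has more than one term. The closed form $\sum_{q=N+1}^{\infty} q/2^q = (N+2)/2^N$, which I would verify by the standard trick of forming $2S-S$, then gives
\[
\sum_{q=N+1}^{\infty}\frac{q}{2^q-1} \;<\; \frac{2^{N+1}}{2^{N+1}-1}\cdot\frac{N+2}{2^N} \;=\; \frac{2(N+2)}{2^{N+1}-1}.
\]
Multiplying by the factor $2$ from the Lemma-\ref{lem:tripleint} step produces the desired bound $\|\nu-\nu_N\|<4(N+2)/(2^{N+1}-1)$.

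There is no real obstacle here; the only point worth pausing on is the strengthening of Lemma \ref{lem:tripleint} from pairwise to triple-wise non-overlap, which is a one-line combinatorial argument but is the structural input doing all the work. Everything else is the elementary tail estimate for $\sum q/2^q$, made compatible with the denominator $2^{N+1}-1$ by the uniform comparison $1/(2^q-1)\le 2^{N+1}/(2^q(2^{N+1}-1))$ valid precisely on the range of summation.
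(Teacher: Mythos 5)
Your proposal is correct and follows essentially the same route as the paper: the factor $2$ from Lemma \ref{lem:tripleint} (at most two $J_{p/q}$ overlap for fixed $q$), the comparison $\frac{1}{2^q-1}\le \frac{2^{N+1}}{2^q(2^{N+1}-1)}$ on the tail, and the closed form $\sum_{q=N+1}^\infty q/2^q=(N+2)/2^N$. The only differences are cosmetic --- you make explicit the one-line upgrade of the lemma from pairwise to ``no triple overlap'' and you evaluate the tail sum by the $2S-S$ trick rather than by differentiating a geometric series --- both of which are fine.
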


\begin{proof}
Let $x \in [-1/2, 1/2]$. For any $q \geq 2$,
\[\sum_{ \substack{1 \leq p < q\\ \gcd(p, q) = 1}} \nu_{p/q}(x) \leq \frac{2q}{2^q - 1} \]
by Lemma \ref{lem:tripleint} and the definition of $\nu_{p/q}$.
Thus,
\[
0 \leq \nu(x) -  \nu_N(x) = \sum_{N+1}^\infty \sum_{ \substack{1 \leq p < q\\ \gcd(p, q) = 1}} \nu_{p/q}(x)
\leq \sum_{q = N+1}^\infty \frac{2q}{2^q - 1}\leq
2 \left( \frac{2^{N+1}}{2^{N+1} - 1} \right) \sum_{q = N+1}^\infty \frac{q}{2^q} .
\]
The last sum can be explicitly evaluated via
\[
\sum_{q = N+1}^\infty \frac{q}{2^q}=
-\partial_\alpha \left[ \sum_{q = N+1}^\infty e^{-q\alpha}\right]_{\alpha=\ln(2)} .
\]
We leave the details to the reader, but the upshot is
\[
0 \leq \nu(x) -  \nu_N(x) \leq 2 \left( \frac{2^{N+1}}{2^{N+1} - 1} \right) \left( \frac{N+2}{2^{N}} \right) = \frac{4(N+2)}{2^{N+1}-1} .
\qedhere \]
\end{proof}

\end{section}

\begin{section}{The Tail of the Distribution}

As we will see in the conclusion, the general appearance of the final distribution $\nu(x)$ is rather spiky.
One might still think that perhaps $\nu(x)$ could be some kind of a piecewise constant approximation of a $Q$-Gaussian (see next section). In order to definitively dispel that idea, we compare the tails of
both distributions: the $Q$-Gaussian and $\nu(x)$ and show that these exhibit a very different behavior.

Since the support of $\nu$ is bounded, we know that if it is a $Q$-Gaussian, then $Q<1$.
In fact, $\nu$ shows a superficial similarity with a $Q$-exponential for $Q$ near 0.7 (see next section).
The average of the distribution is clearly 0, so we can take $y_0=0$ in Definition \ref{def:stats}.
We now use Definitions \ref{def:functions} and \ref{def:stats}, to prove the following result.

\begin{prop} Let $-\ell$ be the left endpoint of the support of a $Q$-Gaussian density $e_Q$ with $Q<1$.
For small positive $z$, we have
\[
e_Q(-\ell+z)= K z^{\frac{1}{1-Q}} +{\cal O}(z^{\frac{2}{1-Q}})\,,
\]
for some $K>0$.
\label{prop:gauss-tail}
\end{prop}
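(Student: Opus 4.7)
The plan is a direct asymptotic expansion of the explicit formula for the density. From Definitions \ref{def:functions} and \ref{def:stats}, the $Q$-Gaussian density (with $y_0 = 0$) is
\[
C\,e_Q(-\beta y^2) \;=\; C\bigl(1 - (1-Q)\beta y^2\bigr)^{1/(1-Q)},
\]
and since $Q < 1$ this is supported on $[-\ell, \ell]$ with $\ell = [(1-Q)\beta]^{-1/2}$, the positive solution of the endpoint identity $(1-Q)\beta \ell^2 = 1$. First I would record this identity explicitly, since it is the algebraic tool that kills the constant term after the substitution in the next step.

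The computational heart is the substitution $y = -\ell + z$: expanding $(\ell - z)^2$ and applying $(1-Q)\beta\ell^2 = 1$ to cancel the constant part gives
\[
1 - (1-Q)\beta(\ell - z)^2 \;=\; 2(1-Q)\beta\,\ell\cdot z\cdot \left(1 - \frac{z}{2\ell}\right).
\]
This is the decisive step: it isolates the single factor of $z$ that, after raising to the power $1/(1-Q)$, produces the leading $z^{1/(1-Q)}$ behavior. Setting $K := C\,[2(1-Q)\beta\,\ell]^{1/(1-Q)}$, which is strictly positive since $Q < 1$, I would then have
\[
e_Q(-\ell + z) \;=\; K\,z^{1/(1-Q)}\,\left(1 - \frac{z}{2\ell}\right)^{1/(1-Q)}.
\]

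To finish, I would apply a binomial (Taylor) expansion of the remaining factor for small $z > 0$, namely $(1 - z/(2\ell))^{1/(1-Q)} = 1 + O(z)$, yielding $e_Q(-\ell + z) = K z^{1/(1-Q)} + O\bigl(z^{1+1/(1-Q)}\bigr)$. There is no real obstacle in this argument; the only subtlety worth flagging is that the naturally arising correction exponent $1 + 1/(1-Q)$ coincides with the stated $2/(1-Q)$ exactly at $Q = 0$, so the displayed error term should be read in the ``one order beyond leading'' sense. All that is used in Section 5 is the leading-order vanishing $K z^{1/(1-Q)}$ with $K > 0$ and positive power $1/(1-Q)$, and this is precisely what the calculation above delivers.
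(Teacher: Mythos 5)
Your proposal is correct and follows essentially the same route as the paper: substitute $y=-\ell+z$, use the endpoint identity $(1-Q)\beta\ell^2=1$ to cancel the constant term, and factor out the leading power of $z$ (the paper writes this as $e_Q(-\ell+z)=C\left[2\ell^{-1}z-\ell^{-2}z^2\right]^{1/(1-Q)}$ and then simply says the conclusion follows for small $z$). Your extra observation is apt: the natural correction exponent is $1+\tfrac{1}{1-Q}$, which agrees with the stated $\tfrac{2}{1-Q}$ only at $Q=0$ and is actually weaker than the stated bound for $0<Q<1$; since Section 5 only uses the leading-order behavior $Kz^{1/(1-Q)}$ with $K>0$, this does not affect anything downstream.
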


\begin{proof} According to the definitions just cited, $e_Q(x)=C\left(1-(1-Q)\beta z^2\right)^{1/(1-Q)}$.
The support of this distribution is $[-\ell,\ell]$, where $\ell=\left[(1-Q)\beta\right]^{-1/2}$. So if we set
$x=-\ell+z$, we obtain
\[
e_Q(-\ell+z)=C\left[2\ell^{-1}z-\ell^{-2}z^2\right]^{1/(1-Q)} \,.
\]
The conclusion follows by taking $z$ to be small (and positive).
\end{proof}

\begin{prop} Take $z$ to be small and positive and equal to $1/q$ (with $q\in\N$). Now we have
\[
\nu\left(-\frac 12 +z\right)= 2\;\frac{2^{-1/z}}{z}+{\cal O}\left(\frac{2^{-2/z}}{z^2}\right)\,.
\]
\label{prop:nu-tail}
\end{prop}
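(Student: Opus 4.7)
The plan is to evaluate $\nu$ at $x=-\tfrac12+z$ with $z=1/q^*$ directly from Theorem 3.3. Since $\nu_{p/q}=\tfrac{q}{2^q-1}\,\mathbf{1}_{J_{p/q}}$ and $\nu=\sum_{p/q}\nu_{p/q}$, we have
\[
\nu\!\left(-\tfrac12+\tfrac{1}{q^*}\right)\;=\;\sum_{\substack{p,q:\;\gcd(p,q)=1\\ x\in J_{p/q}}}\frac{q}{2^q-1},
\]
so the whole task reduces to identifying the pairs $(p,q)$ for which $x\in J_{p/q}$ and estimating the resulting sum. I would split the work into the dominant contributions from $p=1$ and a remainder from $p\ge 2$.

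First I would handle $p=1$. Since $s^+(1/q)$ has its only $1$ in position $q-1$, we get $t_{1/q}=2^{-(q-1)}$ and
\[
J_{1/q}\;=\;\bigl[\tfrac{1-2^{-(q-1)}}{q}-\tfrac12,\;\tfrac{2-2^{-(q-1)}}{q}-\tfrac12\bigr].
\]
Translating $x\in J_{1/q}$ into a range for $q$ gives $q^*(1-2^{-(q-1)})\le q\le q^*(2-2^{-(q-1)})$, and a direct check of the endpoints shows that, for $q^*$ large, this holds precisely for $q\in\{q^*,q^*+1,\dots,2q^*-1\}$ (the lower bound excludes $q=q^*-1$ once $q^*\ge 5$, and the upper bound excludes $q=2q^*$).

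Combining the closed-form identity $\sum_{q\ge Q}q\,2^{-q}=(Q+1)\,2^{1-Q}$ (from differentiating the geometric series at $x=1/2$) with $q/(2^q-1)=q/2^q+O(q/4^q)$ then yields
\[
\sum_{q=q^*}^{2q^*-1}\frac{q}{2^q-1}\;=\;(q^*+1)\,2^{1-q^*}-(2q^*+1)\,2^{1-2q^*}+O(q^*/4^{q^*}),
\]
whose leading behaviour $2\,q^*/2^{q^*}=2\cdot 2^{-1/z}/z$ matches the asserted main term.

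The hardest step is controlling the $p\ge 2$ contributions and keeping them within the claimed $O(2^{-2/z}/z^2)$. For this I would use a quantitative bound on $t_{p/q}$: the definition $s_i^+=\lfloor(i+1)p/q\rfloor-\lfloor ip/q\rfloor$ gives $s_i^+=0$ for all $i<\lceil q/p\rceil-1$, so the first $1$ of $s^+(p/q)$ sits at position $\lceil q/p\rceil-1$ and hence $t_{p/q}\le 2p\cdot 2^{-q/p}$. Inserting this into the membership condition $q\ge q^*(p-t_{p/q})$ shows that the admissible $q$ for fixed $p\ge 2$ lie in a window of length $O(q^*)$ starting essentially at $q^*p$. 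Applying the closed-form identity again on this window gives a per-$p$ contribution of size $O(q^*p/2^{q^*p})$, and the geometric series $\sum_{p\ge 2}O(q^*p/2^{q^*p})$ is dominated by the $p=2$ term, totalling $O(q^*/2^{2q^*})\subset O(2^{-2/z}/z^2)$. The principal technical difficulty is this quantitative analysis of $t_{p/q}$ for general $p$, since unlike the $p=1$ case it has no single clean closed form; but the floor-function definition of $s^+$ makes the first-one location and hence the exponential decay rate tractable.
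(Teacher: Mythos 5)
Your proposal is correct and follows the same overall strategy as the paper: the point $-\tfrac12+\tfrac{1}{q^*}$ lies in $J_{1/q}$ exactly for $q\in\{q^*,\dots,2q^*-1\}$, and summing $q/(2^q-1)$ over that window via the closed form $\sum_{q\ge Q}q\,2^{-q}=(Q+1)2^{1-Q}$ gives the main term. The only real difference is how the remaining contributions are controlled. The paper invokes Lemma \ref{lem:tripleint}: for each denominator $k\ge 2q^*$ at most two of the $J_{r/k}$ can contain the point, so the leftover is at most $2\sum_{k\ge 2q^*}k/(2^k-1)=O(q^*4^{-q^*})$. You instead localize, for each fixed $p\ge 2$, the admissible denominators to a window of length $O(q^*)$ starting near $q^*p$, using the bound $t_{p/q}\le 2p\cdot 2^{-q/p}$ obtained from the position of the first ``1'' in $s^+(p/q)$; this yields the same $O(q^*4^{-q^*})$ bound. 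Your route is slightly more work but also more complete: the paper's upper bound tacitly assumes that no $J_{p/k}$ with $p\ge 2$ contains the point when $k<2q^*$, and justifying that requires exactly the quantitative smallness of $t_{p/k}$ that you establish (the crude bound $t_{p/k}<1$ only forces $k>q^*$ when $p=2$). One caveat applies equally to your write-up and to the paper: the $p=1$ sum equals $(q^*+1)2^{1-q^*}-\dots$, whose difference from the asserted main term $2q^*2^{-q^*}$ is $2^{1-q^*}=2\cdot 2^{-1/z}$, and this is \emph{not} absorbed by the stated error $O(2^{-2/z}/z^2)$; the remainder in the proposition should really be $O(2^{-1/z})$ (this does not affect the tail comparison for which the result is used).
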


\begin{proof} First we compute a lower bound for $\nu\left(-\frac 12 + \frac 1q\right)$ for a large fixed $q$.
According to equation \ref{eq:nu(z)} and Theorem \ref{thm:nudensity}, the final density $\nu$
contains all terms $\nu_{1/k}$ which has support
$J_{1/k}=\left[\frac{1-t_{1/k}}{k}-\frac 12, \frac{2-t_{1/k}}{k}-\frac 12\right]$, where $t_{1/k}=2^{1-k}$.
Fix $q$ and let $k>q$, then $(-\frac 12 + \frac 1q)$ is in $J_{1/k}$ iff $q\leq k$ and
\[
\frac{1}{q}<\frac{2-2^{1-k}}{k} \quad \Leftrightarrow \quad k<2q-q2^{1-k}\,.
\]
Thus the intervals $J_{1/k}$ overlap at $-\frac 12 +z$ at least when $k\in \{q, \cdots, 2q-1\}$ for $q$ large. So
for $z=1/q$:
\[
\nu\left(-\frac 12 +\frac 1q\right)\geq \sum_{k=q}^{2q-1}\,\frac{k}{2^k-1}\geq
\sum_{k=q}^{2q-1}\,\frac{k}{2^k}=2^{-q}(2q+2)-2^{-2q}(4q+2) \,.
\]
Now substitute $q=1/z$.

Next, we compute an upper bound. The reasoning is similar. Fix some large $q$. From the previous, we know that
the intervals $J_{1/k}$ overlap at $-\frac 12 +z$ when $k\in \{q, \cdots, 2q-1\}$. For larger values of $k$, Lemma
\ref{lem:tripleint} implies that for any fixed $k$ at most two of the $J_{r/k}$ overlap at $-\frac 12 +z$.
Thus
\[
\nu\left(-\frac 12 +\frac 1q\right) \leq \sum_{k=q}^{2q-1}\,\frac{k}{2^k-1}+ 2\; \sum_{k=2q}^{\infty}\,\frac{k}{2^k-1} \leq
\frac{2^q}{2^q-1}\,\sum_{k=q}^{2q-1}\,\frac{k}{2^k}+ 2\; \frac{2^{2q}}{2^{2q}-1}\,\sum_{k=2q}^{\infty}\,\frac{k}{2^k} \,,
\]
which, after some effort, simplifies to $2^{-q}(2q+2)+{\cal O}\left((2^{-q}(2q+2))^2\right)$.
Again, substitute $q=1/z$. Finally, put the two inequalities together to prove the proposition.
\end{proof}

The conclusion of this is that the two tails are clearly incompatible. The decay of $\nu$ is inconsistent
with any $Q$-Gaussian decay. In fact, it is a simple exercise to show that
$e_Q(-\ell+z)/\nu(-\frac 12 +z)$ tends to infinity as $z$ tends to 0 for any $Q<1$.

\end{section}

\begin{section}{Conclusion}

In the following table, as well as Figure \ref{fig: nu graphs}, we illustrate the way the density is generated
from the densities $\nu_{p/q}$ for $q\leq 5$. It is interesting to note that, indeed, whenever both $p$ and
$p+1$ are relative prime to $q$, then the support $\nu_{p/q}$ and that of $\nu_{(p+1)/q}$ overlap (see Lemma
\ref{lem:tripleint}). This phenomenon seems to be responsible a lot of the ``spiky-ness" of the
final distribution.

\begin{center}
\begin{tabular}{| c | c | c | c |}
\hline
\(p/q\) & \(t_{p/q}\) & \(I_{p/q}\) & \(J_{p/q}\) \\
\hline
\(1/2\) & \(0.10 = 1/2\) & \([ 1/3 , 2/3 ]\) & \([ -1/4 , 1/4 ]\) \\
\(1/3\) & \(0.010 = 1/4\) & \([ 1/7 , 2/7 ]\) & \([ -1/4 , 1/12 ]\) \\
\(2/3\) &  \(0.110 = 3/4\) & \([ 5/7 , 6/7 ]\) & \([ -1/12 , 1/4 ]\) \\
\(1/4\) & \(0.0010 = 1/8\) & \([ 1/15 , 2/15 ]\) & \([ -9/32 , -1/32 ]\) \\
\(3/4\) & \(0.1110 = 7/8\) & \([ 13/15 , 14/15 ]\) & \([ 1/32 , 9/32 ]\) \\
\(1/5\) &  \(0.00010 = 1/16\) & \([ 1/31 , 2/31 ]\) & \([ -25/80 , -9/80 ]\) \\
\(2/5\) & \(0.01010 = 5/16\) & \([ 9/31 , 10/31 ]\) & \([ -13/80 , 3/80 ]\) \\
\(3/5\) & \(0.10110 = 11/16\) & \([ 21/31 , 22/31 ]\) & \([ -3/80 , 13/80 ]\) \\
\(4/5\) & \(0.11110 = 15/16\) & \([ 29/31 , 30/31 ]\) & \([ 9/80 , 25/80 ]\) \\
\hline
\end{tabular}
\end{center}

\begin{figure}[!ht] 
\begin{center}
\includegraphics[width = 0.4\linewidth]{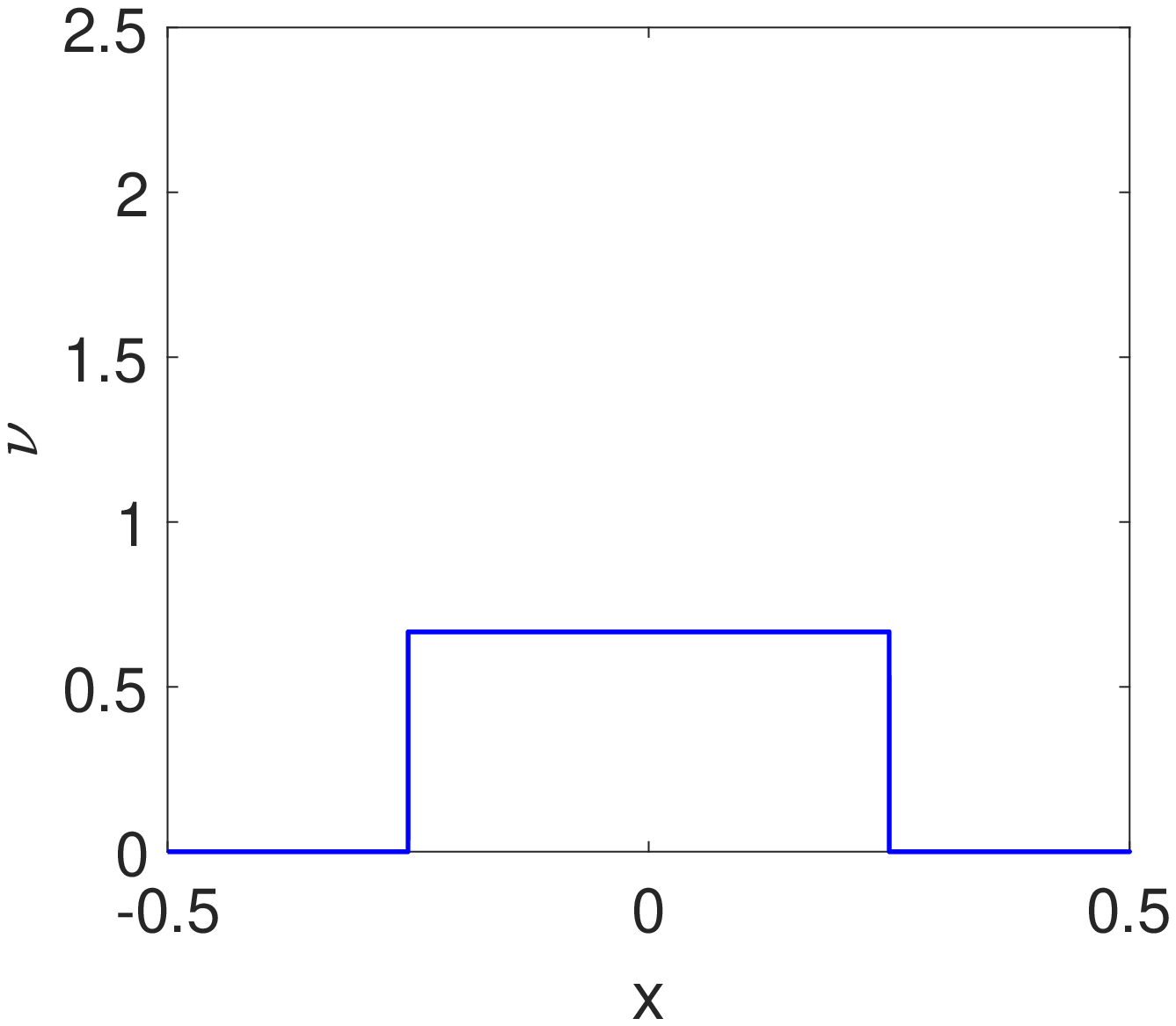} \includegraphics[width = 0.4\linewidth]{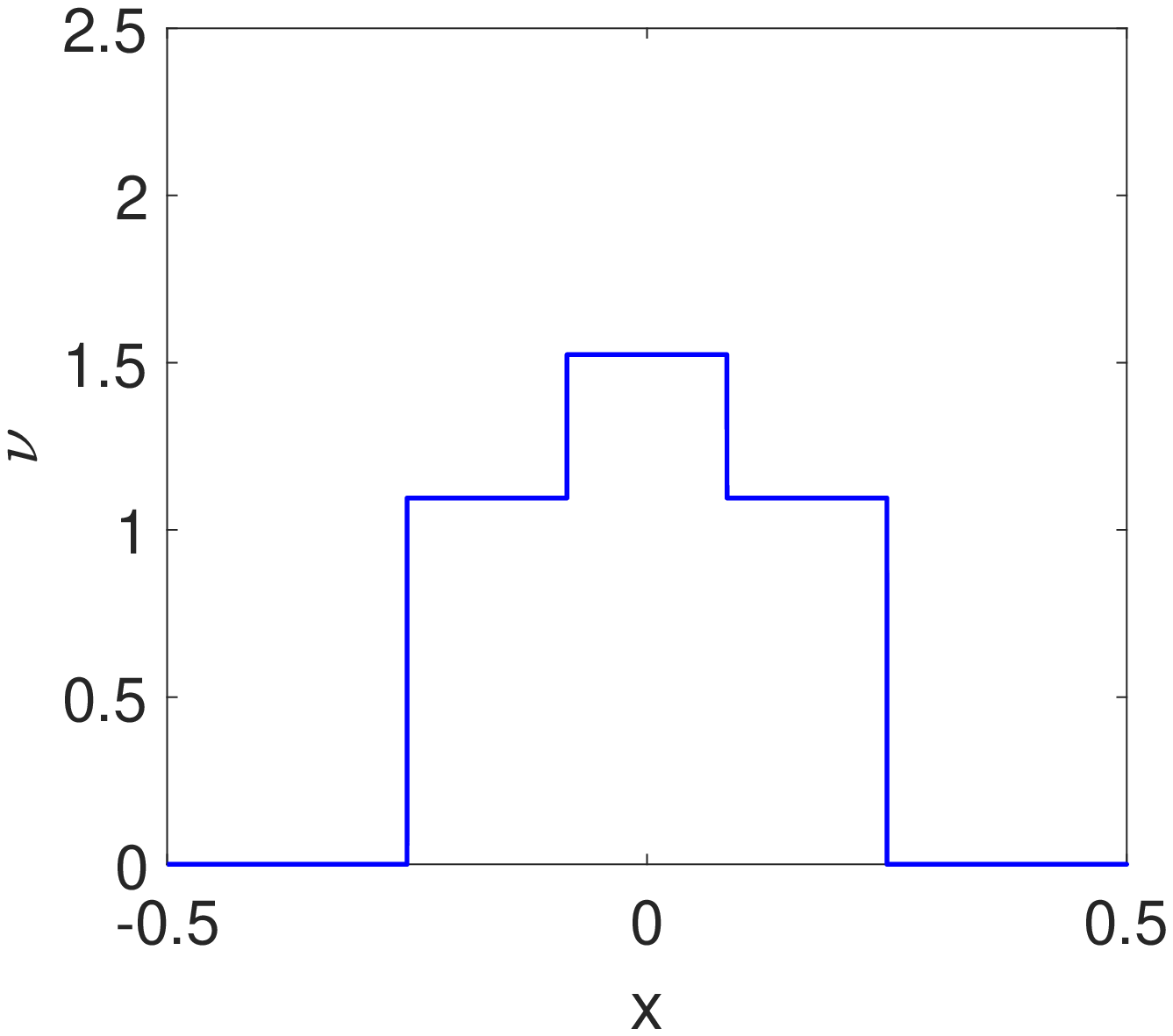} \\
\includegraphics[width = 0.4\linewidth]{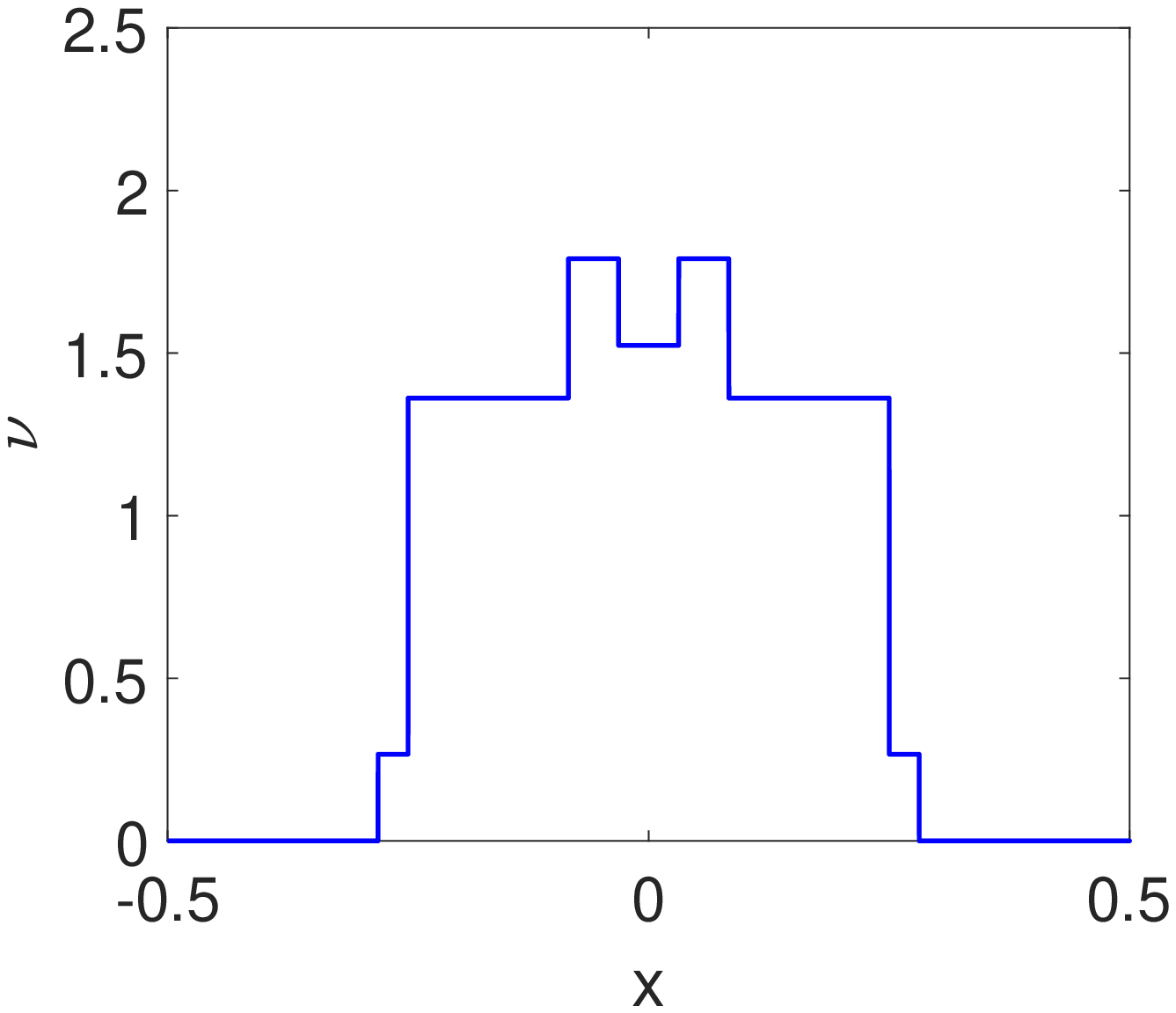} \includegraphics[width = 0.4\linewidth]{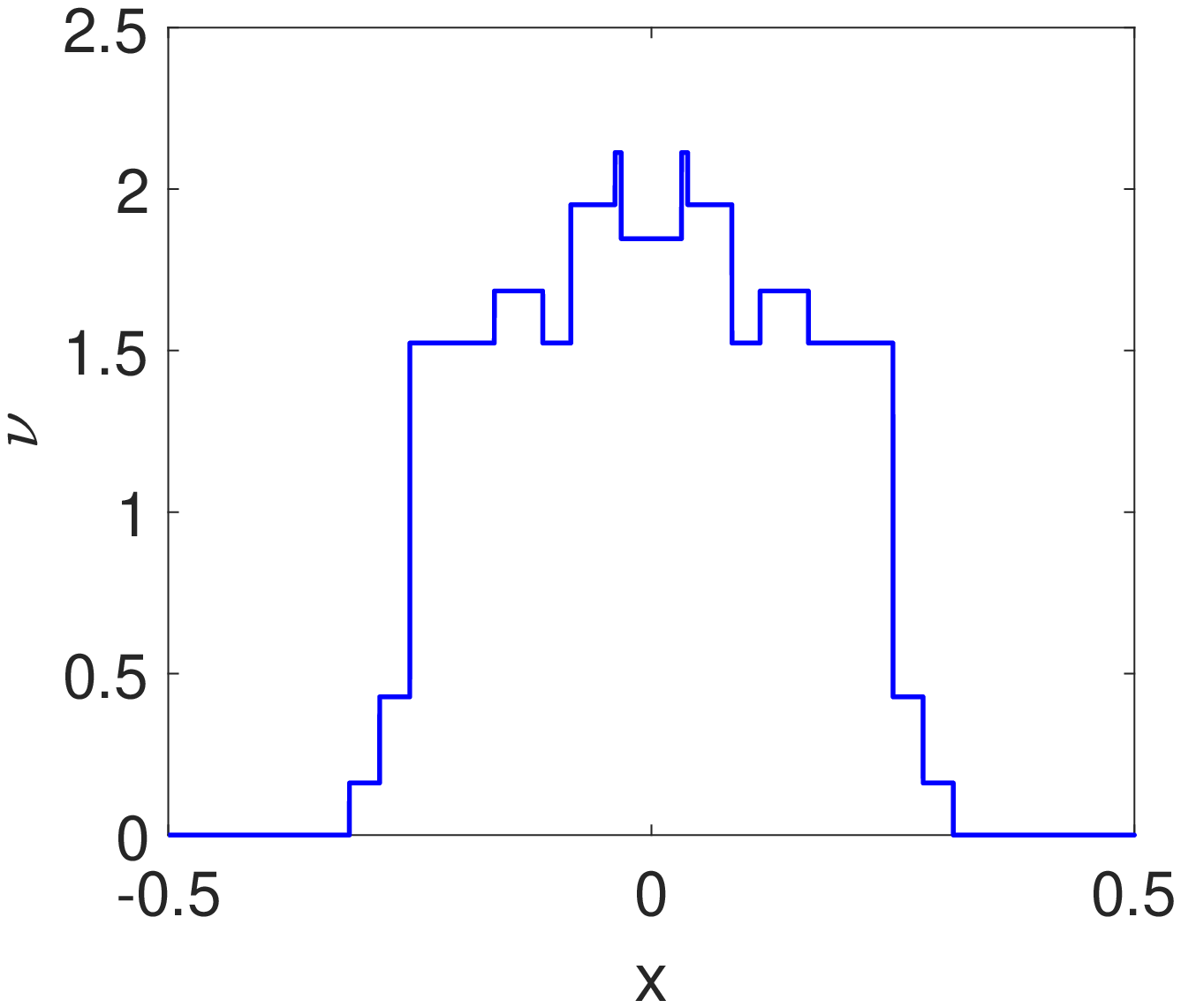}
\caption{From left to right, top to bottom: the partial sums \(\nu_2\), \(\nu_3\), \(\nu_4\), and \(\nu_5\).}
\label{fig: nu graphs}
\end{center}
\end{figure}

\begin{figure}[!ht] 
\begin{center}
\includegraphics[width = 0.55\linewidth]{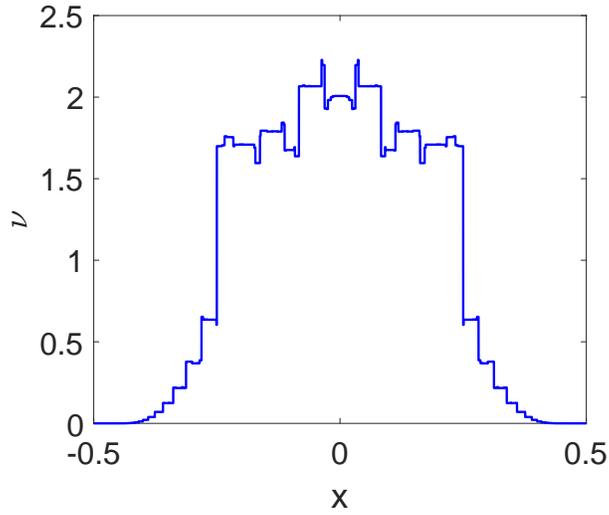}
\caption{The sum of all $\nu_{p/q}$ for $q\leq 50$.}
\label{fig:q50}
\end{center}
\end{figure}

\begin{figure}[!ht] 
\begin{center}
\begin{tabular}{ll}
\includegraphics[scale = 0.6]{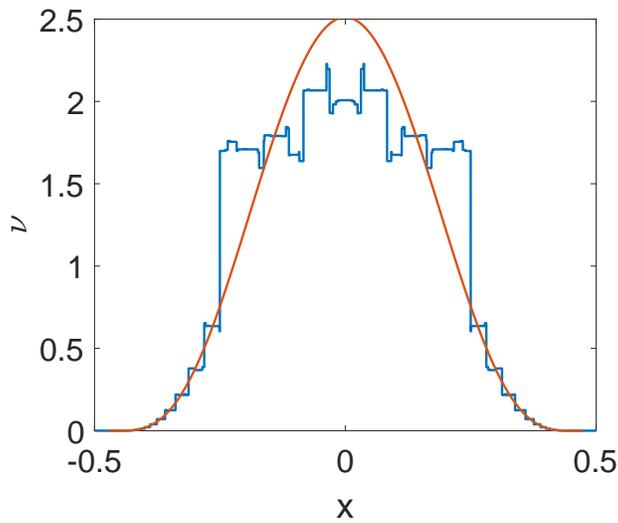}
\end{tabular}
\caption{The distribution \(\nu\) with the \(Q\)-exponential fit ($Q=0.7$, $\beta=16.1$).}
\label{fig: tsallis graph}
\end{center}
\end{figure}

Figure \ref{fig:q50} gives the partial sum \(\nu_{50}\). Theorem \ref{thm: sup norm error} tells us that
this figure approximates the true density $\nu$ with an (absolute) error less than $10^{-13}$.
The graph of this density resembles the profile of the building in the ``Ghostbusters" movie much more than it resembles a $Q$-Gaussian density for any $Q$ (see Definition \ref{def:stats}).

In Figure \ref{fig:q50}, it can be seen that, as mentioned before, that the $Q=0.7$-Gaussian traces the
average behavior of the data reasonably well, though it should be observed that the displayed $Q$-exponential is not normalized. In view of the fact that in earlier numerical simulations
$Q<1$ had already been observed in (critical) circle maps \cite{AT-2010}, one might thus be tempted
to conclude that a smoothed version this density could actually be $Q$-Gaussian. However, this notion
was dispelled by studying the tails of these distributions more carefully (see Section 5).

\end{section}

\appendix
\section{Summing fractional parts of a geometric series}

\begin{lem} Denote the expression $d_j2^{-j}$ with $d_{j}$ equal to 0 or 1 by $t_{-j}$.
For $1\leq j\leq n+1$:
\[
\sum_{i=0}^n\,\left\{2^it_{-j} \right\}={d_j-t_{-j}}\,.
\]
\label{lem:geometricseries}
\end{lem}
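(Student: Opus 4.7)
The plan is to verify the identity by a direct computation, splitting into cases on the binary digit $d_j \in \{0,1\}$. When $d_j = 0$, we have $t_{-j} = 0$, so every term $\{2^i t_{-j}\}$ vanishes, and the right hand side $d_j - t_{-j}$ is also zero, settling this case.

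For $d_j = 1$, I would exploit that $t_{-j} = 2^{-j}$, so $2^i t_{-j} = 2^{i-j}$, and split the summation range according to whether $i < j$ or $i \geq j$. In the first range, $2^{i-j} \in (0,1)$, so $\{2^i t_{-j}\} = 2^{i-j}$. In the second range, $2^{i-j}$ is a nonnegative integer and contributes nothing to the sum of fractional parts. The hypothesis $j \leq n+1$ enters precisely here: it guarantees that the cutoff index $j-1$ lies inside $\{0, \ldots, n\}$, so the full ``fractional'' portion of the geometric series is captured, rather than being truncated before reaching the integer regime.

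The remaining step is to evaluate the resulting geometric sum $\sum_{i=0}^{j-1} 2^{i-j} = 1 - 2^{-j}$ and to recognize this as $d_j - t_{-j}$ when $d_j = 1$. The proof really is this short; the only point deserving attention is the role of the bound $j \leq n+1$, which is the sole reason the identity is exact rather than off by a tail of geometric terms. There is no substantial obstacle beyond bookkeeping.
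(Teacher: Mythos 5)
Your proof is correct and follows essentially the same route as the paper's: split the sum at index $j$, observe that $\{2^i t_{-j}\} = 2^i t_{-j}$ for $i < j$ and vanishes for $i \geq j$, and evaluate the resulting geometric sum. The paper handles both values of $d_j$ in a single computation rather than by cases, but this is a cosmetic difference only.
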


\begin{proof} For $i\in\{0,\cdots,j-1\}$, we have $\left\{2^it_{-j} \right\}= 2^it_{-j}$.
After that, $\left\{2^it_{-j} \right\}$ equals zero. Thus the sum in the lemma equals:
\[
\sum_{i=0}^{j-1}\,2^it_{-j} = \frac{2^j-1}{2 -1}\,d_j 2^{-j}= {d_j-d_j2^{-j}}\,,
\]
which equals the expression in the RHS of the lemma.
\end{proof}

\begin{prop} Now let $t:=\sum_{j=1}^q\,d_j2^{-j}=\sum_{j=1}^q\,t_{-j}$. Then
\[
\sum_{i=0}^{q-1}\,\left\{2^i t \right\}=\sum_{j=1}^q d_j-t
\quad \qand \quad
\sum_{i=0}^{q-1}\,\left\lfloor2^i t \right\rfloor=2^q t-\sum_{j=1}^q d_j\,.
\]
\label{prop:geometricseries}
\end{prop}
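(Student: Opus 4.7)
The plan is to apply Lemma \ref{lem:geometricseries} termwise after first verifying that in this particular setting the fractional part distributes over the decomposition $t=\sum_{j=1}^q t_{-j}$. This distributivity is the only nontrivial ingredient; once it is in place, both identities fall out of index manipulation.

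First I would note that for each pair $(i,j)$ with $0\le i\le q-1$ and $1\le j\le q$, the number $2^i t_{-j}=d_j 2^{i-j}$ is an integer when $i\ge j$ (so its fractional part vanishes) and lies in $[0,1)$ when $i<j$. Consequently $\{2^i t_{-j}\}$ equals $d_j 2^{i-j}$ for $j>i$ and $0$ otherwise, so
\[
\sum_{j=1}^{q}\{2^i t_{-j}\}=\sum_{j=i+1}^{q} d_j 2^{i-j}.
\]
The right-hand side is bounded above by $\sum_{j=i+1}^{q} 2^{i-j}=1-2^{i-q}<1$, hence already lies in $[0,1)$. On the other hand, writing $2^i t=\sum_{j=1}^{i}d_j 2^{i-j}+\sum_{j=i+1}^{q}d_j 2^{i-j}$ separates $2^i t$ into an integer part plus a quantity in $[0,1)$, so $\{2^i t\}=\sum_{j=i+1}^{q}d_j 2^{i-j}$. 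Combining these shows $\{2^i t\}=\sum_{j=1}^q \{2^i t_{-j}\}$; no ``carries'' occur.

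Next I would swap the order of summation and apply Lemma \ref{lem:geometricseries} with $n=q-1$ for each $j$ (which is legal since $1\le j\le q=n+1$):
\[
\sum_{i=0}^{q-1}\{2^i t\}=\sum_{j=1}^{q}\sum_{i=0}^{q-1}\{2^i t_{-j}\}=\sum_{j=1}^{q}(d_j-t_{-j})=\sum_{j=1}^{q}d_j-t,
\]
which is the first identity. For the second, I would use $\lfloor x\rfloor = x-\{x\}$ together with the geometric sum $\sum_{i=0}^{q-1}2^i=2^q-1$, yielding
\[
\sum_{i=0}^{q-1}\lfloor 2^i t\rfloor=(2^q-1)t-\Bigl(\sum_{j=1}^{q}d_j-t\Bigr)=2^q t-\sum_{j=1}^{q}d_j.
\]

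The main (mild) obstacle is precisely the distributivity step above: the identity $\{x+y\}=\{x\}+\{y\}$ generally fails, so one has to argue that the carries vanish here. I expect the bound $\sum_{j=i+1}^{q}2^{i-j}<1$, which encodes that the ``tail'' of the binary expansion of $2^i t$ is strictly less than $1$, to be the cleanest way to justify it. Everything else is bookkeeping.
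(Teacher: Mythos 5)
Your proposal is correct and follows essentially the same route as the paper: both split $2^i t$ into the integer part $\sum_{j\le i} d_j 2^{i-j}$ plus the tail $\sum_{j>i} d_j 2^{i-j}\in[0,1)$ to justify applying Lemma \ref{lem:geometricseries} termwise, and both obtain the floor identity from the fractional-part identity via the geometric sum. Your write-up is, if anything, more explicit than the paper's about why no carries occur in the distributivity step.
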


\begin{proof} That the formulae in the proposition are equivalent can be seen by summing them,
which yields the usual geometric sum. So it suffices to prove the first formula.

We have
\[
\sum_{i=0}^{q-1}\,\left\{2^i t \right\}=\sum_{i=0}^{q-1}\,\left\{2^i \sum_{j=1}^q\,t_{-j} \right\}
\]
Furthermore,
\[
\left\{2^i \sum_{j=1}^q\,t_{-j} \right\}=
\left\{2^i \sum_{j=1}^i\,t_{-j} + 2^i\sum_{j=i+1}^q\,t_{-j}\right\}\,
\]
and $2^i \sum_{j=1}^i\,t_{-j}$ is an integer while $2^i\sum_{j=i+1}^q\,t_{-j}$ is in $[0,1)$.
Thus Lemma \ref{lem:geometricseries} then gives
\[
\sum_{i=0}^{q-1}\,\sum_{j=1}^q\,\left\{2^i t_{-j} \right\} =
\sum_{j=1}^q\,d_j-t_{-j}\,,
\]
which in turns simplifies to the required expression.
\end{proof}

\end{document}